\theoremstyle{plain}
\newtheorem{thm}{Theorem}[section]
\newtheorem{lem}[thm]{Lemma}
\newtheorem{prop}[thm]{Proposition}
\theoremstyle{definition}
\newtheorem{question}[thm]{Question}
\theoremstyle{remark}
\newtheorem{rmk}[thm]{Remark}
\newtheorem{claim}[thm]{Claim}
\def\A{{\mathbb A}}
\def\k{{\mathbb K}}
\def\cO{\mathcal{O}}
\def\.{\cdot}
\def\^{\widehat}
\def\({\left(}
\def\){\right)}
\renewcommand{\and}{ \ \ \text{ and } \ \ }
\begin{document}

\author[D.~Bath]{Daniel Bath}
\address{KU Leuven, Departement Wiskunde, Celestijnenlaan 200B, Leuven
3001, Belgium}
\email{dan.bath@kuleuven.be}

\author[M.~Musta\c{t}\u{a}]{Mircea Musta\c{t}\u{a}}
\address{Department of Mathematics, University of Michigan, 530 Church Street, Ann Arbor, MI 48109, USA}
\email{mmustata@umich.edu}

\author[U.~Walther]{Uli Walther}
\address{Purdue University, Department of Mathematics, 150 N. University St.,
West Lafayette, IN 47907, USA}
\email{walther@purdue.edu}

\date{December 14, 2024}

\title{Singularities of square-free polynomials}

\thanks{D.B.\ was supported by FWO grant \#12E9623N. M.M.\ was partially supported by NSF grant DMS-2301463 and by the Simons Collaboration grant \emph{Moduli of
Varieties}. U.W. was supported by NSF grant DMS-2100288.}

\subjclass[2020]{14B05, 14J17, 32S25}

\keywords{Square-free polynomial, rational singularity, minimal log discrepancy.}

\begin{abstract}
We prove that hypersurfaces defined by irreducible square-free polynomials have rational singularities. As an easy consequence, we deduce that certain (possibly non-square-free) polynomials associated to pairs of square-free polynomials define hypersurfaces with rational singularities. This extends results on certain classes of polynomials associated to matroids and Feynman diagrams in \cite{BW}. 
\end{abstract}

\maketitle

\section{Introduction}

In \cite{BW}, two of the authors of the present paper introduced and studied several classes of polynomials that can be attached 
to matroids (for example, the \emph{matroid support polynomials} and the more general class of \emph{matroidal polynomials}). 
One of the main results in \emph{loc.\ cit}. says that if the matroid is connected and of positive rank (which implies that the polynomial is irreducible), then the corresponding hypersurface has rational singularities. For matroidal polynomials (see \cite[Corollary~4.29]{BW}),
this was done using an intricate analysis of the jet schemes of the hypersurface and the characterization of rational hypersurface singularities via jet schemes 
in \cite{Mustata}.

Our main result says that, in fact, all irreducible square-free polynomials define hypersurfaces with rational singularities. We work over
an algebraically closed field $\k$ of characteristic $0$. Recall that a nonzero polynomial $f\in \k[x_1,\ldots,x_n]$ is \emph{square-free} if 
every monomial that appears in $f$ has degree $\leq 1$ with respect to each variable. 

\begin{thm}\label{thm_main}
If $Z\subset \A^n$ is the hypersurface defined by an irreducible square-free polynomial $f\in \k[x_1,\ldots,x_n]$,
then $Z$ has rational singularities.
\end{thm}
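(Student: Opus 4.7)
My plan is to induct on the number of variables $n$. If $f$ is independent of some variable $x_i$, then $Z\cong V(f|_{x_i=0})\times\A^1$, and since rational singularities descend through a product with $\A^1$, the inductive hypothesis applies. So I may assume $f$ involves every variable. Choosing $x_n$, I write $f=x_n g+h$ with $g,h\in\k[x_1,\ldots,x_{n-1}]$ square-free and $g\neq 0$. Irreducibility of $f$ forces $\gcd(g,h)=1$, so the locus $W:=V(g,h)$ has codimension $\geq 2$ in $\A^{n-1}$. Via the projection $\pi:Z\to\A^{n-1}$, the hypersurface $Z$ is the graph of $x_n=-h/g$ on $\{g\neq 0\}$ (hence smooth there), while $\pi^{-1}(w)$ is an affine line for each $w\in W$; thus the singular locus of $Z$ is contained in $W\times\A^1_{x_n}$.

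I then invoke the characterization that a reduced hypersurface $Z$ in a smooth variety has rational singularities iff the pair $(\A^n,Z)$ is purely log terminal along $Z$; equivalently, every divisorial valuation centered on a proper closed subset of $Z$ has positive log discrepancy with respect to $(\A^n,Z)$. The plan is to construct a log resolution of $(\A^n,Z)$ by iteratively blowing up smooth centers inside $W$ (lifted to $\A^n$). Each such blowup modifies $f$ by a monomial factor coming from $g$, and should preserve enough of the multilinear structure of $f$ (after an appropriate change of coordinates) that the process terminates. Along the way one tracks the discrepancies contributed by each exceptional divisor and verifies positivity, using the fact that $W$ itself has codimension $\geq 2$ as a toehold.

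The key obstacle I expect is finding the right strengthened induction hypothesis. The polynomials $g$ and $h$ arising in the recursion are square-free but need no longer be irreducible, so the inductive statement must be formulated in a uniform way that also handles reducible square-free polynomials — perhaps by proving the stronger assertion that $(\A^n,V(f))$ is log canonical (equivalently, $V(f)$ is Du Bois) for every, not necessarily irreducible, square-free $f$, and then deducing rationality in the irreducible case from Cohen-Macaulayness of hypersurfaces together with a normality argument that uses the codimension bound on $W$. Organizing this combinatorial bookkeeping without appealing to the matroidal structure exploited in \cite{BW} appears to be the crux, as does verifying that the class of pairs considered is genuinely closed under the required blowups.
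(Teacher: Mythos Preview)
Your proposal is a plan rather than a proof, and you correctly flag the main obstacles yourself. Several of your ingredients do appear in the paper: the paper proves (as its Lemma~3.1) that $(\A^n,V(f))$ is log canonical for \emph{every} square-free $f$, by exactly the induction-plus-Inversion-of-Adjunction you suggest; and your normality argument via $W=V(g,h)$ having codimension $\geq 2$ is essentially the paper's Claim~\ref{claim3}. So those instincts are sound.

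Where your plan diverges---and where the genuine gap lies---is the proposed main engine: constructing an explicit log resolution by iteratively blowing up smooth centers inside $W\times\A^1$ and tracking discrepancies. You write that each blowup ``should preserve enough of the multilinear structure,'' but square-freeness is a coordinate-dependent notion and is \emph{not} stable under blowups along arbitrary smooth centers; there is no evident change of coordinates on the blowup charts that keeps the strict transform square-free in general. Moreover, $W$ itself may be singular, so even choosing the centers is nontrivial. The paper sidesteps this entirely. Instead of resolving, it proves two numerical facts: (i) for any square-free $f$ and any point $P$, a \emph{single} blowup at $P$ (where the projectivized tangent cone $f_m(y_1,\ldots,1,\ldots,y_n)$ is visibly square-free, so the log canonical lemma applies on the exceptional divisor) yields ${\rm mld}_P(\A^n,Z)\geq n-{\rm mult}_P(Z)$; and (ii) for irreducible square-free $f$, any irreducible $W$ of codimension $r\geq 2$ satisfies ${\rm mult}_W(Z)\leq r-1$, proved by slicing with a general hyperplane $x_1=t$ (the restriction stays square-free, and normality forces it to stay irreducible). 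Combining (i), (ii), and the identity ${\rm mld}_{\eta_W}(\A^n,Z)={\rm mld}_P(\A^n,Z)-\dim W$ for general $P\in W$ gives ${\rm mld}_{\eta_W}(\A^n,Z)\geq 1$, hence rational singularities. The multiplicity bound (ii) is the idea your sketch is missing; it replaces the need for any iterated resolution.
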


We note that the assumption that $f$ is irreducible is natural: having rational singularities implies that the hypersurface is normal
and, in this case, it is easy to see that since $f$ is square-free, it has to be irreducible (see Remark~\ref{rem_normal} below).

In characteristic $p>0$, an analogue of rational singularities is provided by \emph{$F$-rational singularities}, a concept defined using the Frobenius 
homomorphism (see \cite{Smith1}). It is a natural question whether the following positive characteristic version of the above result holds\footnote{Shortly after the first version of this article was made public, a positive answer to this question was provided in \cite{Conca}.}:

\begin{question}\label{question1}
If $\k$ is an algebraically closed field of characteristic $p>0$ and $Z\subset\A^n$ is the hypersurface defined by an irreducible square-free polynomial $f\in \k[x_1,\ldots,x_n]$, does $Z$ have $F$-rational singularities?
\end{question}

We note that in \cite[Theorem~3.4]{BW}, one shows that the above question has a positive answer for the matroid support polynomials associated to connected matroids
of positive rank. 
We also note that a positive answer to Question~\ref{question1} would give another proof for Theorem~\ref{thm_main}, since by results of \cite{Smith2},
\cite{Hara}, and \cite{MS}, it is known that a variety $Z$ in characteristic $0$ has rational singularities if and only if its reduction mod $p$
has $F$-rational singularities for $p\gg 0$.

In \cite{BW}, one also attaches certain non-square-free polynomials (\emph{Feynman integrands} and \emph{Feynman diagram polynomials}) 
to certain combinatorial data. Under suitable assumptions, it is shown in 
\cite[Corollary~6.39 and Theorem~6.44]{BW} that the corresponding hypersurfaces
have rational singularities. 
The following result shows that this is a more general phenomenon (see the discussion before Proposition~\ref{last_prop}).
As before, we assume that $\k$ is algebraically closed, of characteristic $0$.

\begin{thm}\label{thm2}
Let $g, h\in \k[x_1,\ldots,x_n]$ be square-free polynomials, with ${\rm deg}(h)=1+{\rm deg}(g)$, and let
$L=a_1x_1+\ldots+a_nx_n+1$, for some $a_1,\ldots,a_n\in \k$. If $g$ is irreducible and does not divide $h$, then 
the hypersurface $Z$ defined in $\A^n$ by $gL+h$ has rational singularities.
\end{thm}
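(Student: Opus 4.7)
The strategy is to reduce to Theorem~\ref{thm_main} by introducing an auxiliary variable and then recovering $Z$ as a hyperplane section. Consider
$G(x_0, x_1, \ldots, x_n) := x_0\, g + h \in \k[x_0, x_1, \ldots, x_n]$.
It is square-free: it has degree $1$ in $x_0$ (since $g, h$ do not involve $x_0$), and it has degree at most $1$ in each $x_i$ for $i \ge 1$ (inherited from $g$ and $h$). It is also irreducible: any factorization $G = AB$ with $G$ linear in $x_0$ forces one factor, say $B$, to lie in $\k[x_1, \ldots, x_n]$ and to divide both $g$ and $h$; since $g$ is irreducible and $g \nmid h$, $B$ must lie in $\k^{\times}$. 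Hence Theorem~\ref{thm_main} applies, and the hypersurface $W := V(G) \subset \A^{n+1}$ has rational singularities.

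Next, I would realize $Z$ as a hyperplane section of $W$. The linear shear $\Phi \colon \A^{n+1} \to \A^{n+1}$, $(x_0, x) \mapsto (x_0 - L(x), x)$, is an automorphism sending $W$ isomorphically onto $W' := V(x_0\, g + F)$, where $F = gL + h$. In particular, $W'$ has rational singularities. Setting $x_0 = 0$ in the defining equation recovers $F$, so $Z = W' \cap V(x_0)$ is realized as the Cartier divisor on $W'$ cut out by the coordinate $x_0$.

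The principal step, and the main obstacle, is to deduce that this specific hyperplane section has rational singularities. A Bertini-type argument applied to the base-point-free pencil $\{V(x_0 - c)\}_{c \in \P^1}$ restricted to $W'$ yields that the generic fiber $V(cg + F)$ has rational singularities, reducing the problem to showing that $c = 0$ is not among the finitely many bad values. I expect this to be handled by an inversion-of-adjunction-type argument combined with a local analysis of $W'$: the singular locus of $W'$ lies over the codimension-two subset $V(g) \cap V(h) \subset \A^n$, and the hyperplane $V(x_0)$ meets $W'$ transversely outside a lower-dimensional subvariety, which should be enough to control the discrepancies of the pair $(W', Z)$. An alternative route is through the jet-scheme criterion of \cite{Mustata}: comparing the jet schemes $Z_m$ with those of $W$ via the prolongation of the closed embedding $x \mapsto (L(x), x)$, one could exploit the known irreducibility and expected-dimension property of the $W_m$ to conclude the same for $Z_m$.
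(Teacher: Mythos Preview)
Your auxiliary hypersurface $W=V(x_0g+h)\subset\A^{n+1}$ and the verification that $x_0g+h$ is square-free and irreducible are exactly what the paper uses: after the paper's linear change $y_0=L'=x_0+\sum a_ix_i$, its polynomial $f'=gL'+h$ becomes your $x_0g+h$. The divergence, and the genuine gap, is in how you pass from $W$ back to $Z$.

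You try to recover $Z$ as the hyperplane section $W'\cap\{x_0=0\}$ after an affine shear. But rational singularities are not inherited by arbitrary hyperplane sections (for instance $V(xy-z^2)\cap\{z=0\}=V(xy)$ is not even normal), and neither of your proposed fixes is close to a proof: an adjunction argument would require control of the pair $(W',Z)$ that nothing in your description of $\Sing(W')$ provides, and the jet-scheme comparison via $x\mapsto(L(x),x)$ does not exhibit $Z_m$ as an open piece or a retract of $W_m$, so irreducibility and dimension do not transfer. The paper avoids this step entirely by exploiting homogeneity. It keeps the coordinates in which $f'=gL'+h$ is homogeneous of degree $d+1$ (this uses that $g$ and $h$ are homogeneous of degrees $d$ and $d+1$, an assumption invoked explicitly in the paper's proof). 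Then $Z'=V(f')$ is a cone, and on $\{x_0\neq 0\}$ one has $f'(x_0,x)=x_0^{\,d+1}(gL+h)(x/x_0)$, so $Z'\cap\{x_0\neq 0\}\cong Z\times(\A^1\smallsetminus\{0\})$. Rational singularities of $Z'$ then give rational singularities of $Z$ immediately, with no Bertini or adjunction needed. In short: do not cut the auxiliary variety down to $Z$; instead, realize $Z\times(\A^1\smallsetminus\{0\})$ as an open subset of it.
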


The proof of Theorem~\ref{thm_main} makes use of the minimal log discrepancy ${\rm mld}_{\eta_W}(\A^n,Z)$ at the generic point $\eta_W$
of an irreducible closed subset $W\subseteq\A^n$
(for the definition and basic properties of this invariant, see Section~\ref{section_review} below). By a standard result,
$Z$ has rational singularities if and only if ${\rm mld}_{\eta_W}(\A^n,Z)\geq 1$ for all irreducible closed subsets $W$ of $\A^n$
of codimension $r\geq 2$. The key is to show the following two properties, where we denote by ${\rm mult}_W(Z)$ the multiplicity of $Z$ along $W$:
\begin{enumerate}
\item[i)] If $f$ is square-free, then ${\rm mld}_P(\A^n,Z)\geq n-{\rm mult}_P(Z)$ for every $P\in \A^n$.
\item[ii)] If, in addition, $f$ is also irreducible, then ${\rm mult}_W(Z)\leq r-1$.    
\end{enumerate}
The fact that ${\rm mld}_{\eta_W}(\A^n,Z)\geq 1$ follows easily from i) and ii) and the fact that for $P\in W$ general, 
we have ${\rm mld}_{\eta_W}(\A^n,Z)={\rm mld}_P(\A^n,Z)-\dim(W)$ . This gives the conclusion of Theorem~\ref{thm_main}. The assertion in Theorem~\ref{thm2} follows easily from that in Theorem~\ref{thm_main}
by homogenization.

\section{Review of minimal log discrepancies}\label{section_review}

For the proof of Theorem~\ref{thm_main}, it will be convenient to use the notion of minimal log discrepancy.
We review in this section its definition, following \cite{Ambro}, and its connection to certain classes of singularities of pairs, following
\cite{Kollar}.

We work over a fixed algebraically closed field $\k$ of characteristic $0$. Since this will be enough for our purpose,
we only consider the case when the ambient variety $X$ is smooth and connected. Let $n=\dim(X)$. If $\pi\colon Y\to X$ is a birational
morphism, with $Y$ smooth, and $E$ is a prime divisor on $Y$, we
have a corresponding valuation ${\rm ord}_E$ of the
function field $\k(Y)=\k(X)$. We will refer to such $E$ as a \emph{divisor over} $X$. The \emph{center} of $E$ on $X$ is $c_X(E):=\overline{\pi(E)}$.
Note that if $Z$ is a hypersurface in $X$ defined by $f\in\cO_X(X)$, then ${\rm ord}_E(f)$ is the coefficient of $E$ in $\pi^*(Z)$.
Because of this, for every divisor $Z$ on $X$ (possibly non-effective), we denote this coefficient by ${\rm ord}_E(Z)$.

Given $\pi$ as above, the \emph{relative canonical divisor} $K_{Y/X}$ is the effective divisor on $Y$ defined by the determinant of
the morphism of rank $n$ locally free sheaves $\pi^*(\Omega_X)\to\Omega_Y$. If $E$ is a prime divisor on $Y$, we write 
$A_X({\rm ord}_E)$ for the coefficient of $E$ in $K_{Y/X}$ plus $1$. It is easy to check that $A_X({\rm ord}_E)$ only depends on the valuation ${\rm ord}_E$ and not on the
choice of $(Y,E)$. Given a divisor $Z$ on $X$ (not necessarily effective), 
the \emph{log discrepancy} of $(X,Z)$ with respect to $E$, denoted by $a_E(X,Z)$, is given by $A_X({\rm ord}_E)-{\rm ord}_E(Z)$. 

Given a closed subset $W$ of $X$, we define
$${\rm mld}_W(X,Z):=\inf\big\{a_E(X,Z)\mid E\,\,\text{is a divisor over}\,\,X\,\,\text{with}\,\,c_X(E)\subseteq W\big\}.$$
Moreover, if $W$ is an irreducible proper closed subset of $X$, we also consider
$${\rm mld}_{\eta_W}(X,Z):=\inf\big\{a_E(X,Z)\mid E\,\,\text{is a divisor over}\,\,X\,\,\text{with}\,\,c_X(E)=W\big\}.$$
Note that these are both integers or $-\infty$. If $W=\{P\}$, for some 
(closed) point $P\in X$, then we write ${\rm mld}_P(X,Z)$
for ${\rm mld}_W(X,Z)={\rm mld}_{\eta_W}(X,Z)$.
These notions can be extended to the case when $X$ is normal and ${\mathbb Q}$-Gorenstein and $Z$ is a divisor with rational coefficients,
but we will not need this level of generality. 

\begin{rmk}
We note that if $Z'$ is an effective divisor on $X$, then ${\rm ord}_E(Z')\geq 0$ for all divisors $E$ over $X$, and thus 
${\rm mld}_W(X,Z+Z')\leq {\rm mld}_W(X,Z)$ for all $W$. A similar inequality holds for ${\rm mld}_{\eta_W}$.
\end{rmk}

We note that if $Y\overset{h}\to V\overset{g}\to X$ are proper, birational morphisms between smooth varieties, then
$$K_{Y/X}=K_{Y/V}+h^*(K_{V/X}).$$
We thus have 
$$A_X({\rm ord}_E)=A_V({\rm ord}_E)+{\rm ord}_E(K_{V/X})$$
for every divisor $E$ over $X$. By definition of minimal log discrepancies, this implies that for every
closed subset $W\subseteq X$ and every divisor $Z$ on $X$, we have 
\begin{equation}\label{formula_mld}
{\rm mld}_W(X,Z)={\rm mld}_{g^{-1}(W)}\big(V,g^*(Z)-K_{V/X}\big).
\end{equation}

Several classes of singularities of pairs can be described in terms of minimal log discrepancies. 
Suppose now that $Z$ is an effective divisor on $X$.
The pair $(X,Z)$ is 
\emph{log canonical} if and only if ${\rm mld}_X(X,Z)\geq 0$, see \cite[Section~3]{Kollar}. The following well-known fact will be important for us:

\begin{thm}\label{thm_ingredient}
If $Z$ is a hypersurface in the smooth variety $X$, then $Z$ has rational singularities if and only if
${\rm mld}_W(X,Z)\geq 1$ for every irreducible closed subset $W$ of $X$ with ${\rm codim}_X(W)\geq 2$.    
\end{thm}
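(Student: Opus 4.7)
My plan is to establish Theorem~\ref{thm_ingredient} via a chain of three classical equivalences. Fix a log resolution $\pi\colon Y\to X$ of $(X,Z)$; let $\widetilde Z$ denote the (smooth) strict transform of $Z$ and write $\pi^*(Z)=\widetilde Z+F$, with $F=\sum_i b_iE_i$ supported on the $\pi$-exceptional locus. Any divisor $E$ over $X$ with $c_X(E)$ of codimension $\ge 2$ is necessarily $\pi'$-exceptional on any birational model, and a direct computation on the SNC pair $(Y,\widetilde Z+\sum_i E_i)$ shows that the log discrepancy of the exceptional divisor produced by blowing up a smooth stratum $C\subseteq \widetilde Z+\sum_i E_i$ equals the sum of the log discrepancies of the $E_i$ containing $C$ (with $\widetilde Z$ contributing $0$). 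Consequently, the condition ${\rm mld}_W(X,Z)\ge 1$ for every irreducible closed $W\subseteq X$ of codimension $\ge 2$ unpacks to the single requirement $a_{E_i}(X,Z)\ge 1$ for every $\pi$-exceptional prime divisor $E_i$ on $Y$; equivalently, ${\rm ord}_{E_i}(K_{Y/X})\ge b_i$ for every $i$, i.e., the divisor $K_{Y/X}-F$ is effective on $Y$.

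By the adjunction formula, with $\sigma:=\pi|_{\widetilde Z}\colon \widetilde Z\to Z$ a resolution of singularities,
$$K_{\widetilde Z} \;=\; \sigma^*K_Z \,+\, (K_{Y/X}-F)\big|_{\widetilde Z}.$$
So effectivity of $K_{Y/X}-F$ on $Y$ restricts to effectivity of $K_{\widetilde Z}-\sigma^*K_Z$ on $\widetilde Z$, i.e., $Z$ has \emph{canonical singularities}. The converse---that effectivity on $\widetilde Z$ lifts back to effectivity on $Y$---requires the Koll\'ar--Shokurov connectedness theorem to control those $\pi$-exceptional components of $F$ that happen to be disjoint from $\widetilde Z$; this is the precise form of inversion of adjunction needed here.

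Finally, since $Z$ is Gorenstein (being a Cartier divisor in the smooth variety $X$), the Elkik--Kempf theorem gives that $Z$ has canonical singularities if and only if it has rational singularities, which closes the chain of equivalences. The main obstacle is the converse in the middle step, where connectedness must be invoked to lift effectivity from $\widetilde Z$ to all exceptional components of $F$; for a reduced hypersurface in a smooth variety this inversion-of-adjunction reduction is classical and is worked out in detail in, e.g., Koll\'ar--Mori's \emph{Birational Geometry of Algebraic Varieties}.
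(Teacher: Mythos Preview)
Your proposal is correct and follows essentially the same three-step chain as the paper: the mld condition is equivalent to $(X,Z)$ being canonical, which by inversion of adjunction is equivalent to $Z$ being canonical, which by Elkik (since $Z$ is Gorenstein) is equivalent to $Z$ having rational singularities. The paper simply cites Koll\'ar for each step (the reformulation of canonical pairs, Stevens' inversion of adjunction \cite[Theorem~7.9]{Kollar}, and Elkik \cite[Corollary~11.13]{Kollar}), whereas you unpack the middle step via an explicit log-resolution/adjunction computation and invoke the connectedness theorem for the harder direction---but the architecture is the same.
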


\begin{proof}
The fact that ${\rm mld}_W(X,Z)\geq 1$ for every irreducible closed subset $W$ of $X$ with ${\rm codim}_X(W)\geq 2$
is a reformulation of the fact that the pair $(X,Z)$ has canonical singularities (see \cite[Section 3]{Kollar}). By a version of Inversion of Adjunction
due to Stevens (see \cite[Theorem~7.9]{Kollar}), this is equivalent to $Z$ having canonical singularities. Since $Z$ is Gorenstein, this in turn is equivalent to $Z$ having rational singularities by a result of Elkik (see \cite[Corollary~11.13]{Kollar}).
\end{proof}

\section{Proofs of the main results}

In this section, too, we assume that we work over an algebraically closed field $\k$, of characteristic $0$.
We begin with a few easy statements.

\begin{rmk}\label{rem_normal}
If $f\in \k[x_1,\ldots,x_n]$ is a square-free polynomial that defines a nonempty normal hypersurface $Z$, then $f$ is irreducible. 
Indeed, since $Z$ is normal, it is reduced and its irreducible components are disjoint. Since $f$ is square-free and non-invertible, if $f$ is reducible, then we can write
$f=gh$, where $g$ and $h$ are non-invertible, involving disjoint sets of variables (consider the degrees with respect to each variable).
In this case, the hypersurfaces corresponding to $g$ and $h$ would meet nontrivially. 
\end{rmk}

\begin{lem}\label{lem_change_of_variable}
If $f\in \k[x_1,\ldots,x_n]$ is a square-free polynomial and we put $y_i=x_i+a_i$, with $a_i\in k$ for $1\leq i\leq n$, then
$f$ is square-free also as an element of $\k[y_1,\ldots,y_n]$.
\end{lem}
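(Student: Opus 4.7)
The plan is to reformulate square-freeness as a differential condition that is manifestly invariant under the translation $y_i = x_i + a_i$. A polynomial $g \in \k[z_1,\ldots,z_n]$ is square-free in the sense of the introduction (each monomial has degree $\leq 1$ in each variable) precisely when $\partial^2 g/\partial z_i^2 = 0$ for every $i$; this equivalence is immediate from inspecting the monomial expansion, since $\partial^2 z^\alpha/\partial z_i^2$ is nonzero exactly when $\alpha_i \geq 2$. Since the change of variables is a pure translation, the chain rule gives $\partial/\partial y_i = \partial/\partial x_i$ as operators on $\k[x_1,\ldots,x_n] = \k[y_1,\ldots,y_n]$ (these two polynomial rings are literally equal, since the $y_i$ and $x_i$ generate the same $\k$-algebra). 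Hence $\partial^2 f/\partial y_i^2 = \partial^2 f/\partial x_i^2 = 0$ for every $i$, and the characterization above yields that $f$ is square-free in the $y_j$'s.

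Alternatively, one can proceed combinatorially: write $f = \sum_{S \subseteq \{1,\ldots,n\}} c_S \prod_{i \in S} x_i$, substitute $x_i = y_i - a_i$, and observe that each product $\prod_{i \in S}(y_i - a_i)$ expands as a $\k$-linear combination of square-free $y$-monomials supported on subsets of $S$. Summing over $S$ preserves square-freeness. Either route presents no genuine obstacle; the lemma presumably serves to translate an arbitrary point $P \in \A^n$ to the origin in the later local computations of ${\rm mld}_P(\A^n,Z)$ that underlie property (i) of the outline in the introduction.
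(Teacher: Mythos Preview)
Your proposal is correct. The paper's own proof is the single sentence ``The assertion follows directly from the definition,'' which is essentially your second (combinatorial) route spelled out; your differential reformulation via $\partial^2 f/\partial y_i^2 = \partial^2 f/\partial x_i^2$ is a mildly different but equally valid way to see it, with the small caveat that it uses the ambient characteristic-$0$ hypothesis (so that $\alpha_i(\alpha_i-1)\neq 0$ for $\alpha_i\geq 2$), whereas the combinatorial argument works in any characteristic.
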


\begin{proof}
    The assertion follows directly from the definition.
\end{proof}

\begin{lem}\label{lem_log_canonical}
If $Z$ is a hypersurface in $\A^n$ defined by a square-free polynomial $f\in \k[x_1,\ldots,x_n]$, then the pair $(\A^n,Z)$ is log canonical.
\end{lem}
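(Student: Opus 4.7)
The plan is to reduce to checking log canonicity at the origin and then to degenerate $f$ to a single square-free monomial via a generic one-parameter subgroup of the torus, concluding by lower semi-continuity of the log canonical threshold.

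Since log canonicity of $(\A^n, Z)$ is a local condition and the square-free property is preserved under translations by Lemma~\ref{lem_change_of_variable}, it suffices to prove the pair is log canonical at the origin. We may assume $f(0) = 0$, as otherwise the origin lies outside $Z$ and there is nothing to check. The required statement is then $\mld_0(\A^n, Z) \geq 0$, equivalently $\lct_0(f) \geq 1$.

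Write $f = \sum_{S \in \mathcal{F}} c_S \prod_{i \in S} x_i$ with $\emptyset \notin \mathcal{F} \subseteq 2^{\{1,\ldots,n\}}$ finite and $c_S \in \k^*$, and choose positive integer weights $w_1, \ldots, w_n$ (for instance $w_i = 2^{i-1}$) so that the sums $\sum_{i \in S} w_i$ are pairwise distinct as $S$ ranges over $\mathcal{F}$. Let $S_0 \in \mathcal{F}$ be the unique minimizer and set $m = \sum_{i \in S_0} w_i$. Consider the one-parameter family
\[
g_t(x) := t^{-m} f\big(t^{w_1} x_1, \ldots, t^{w_n} x_n\big) = \sum_{S \in \mathcal{F}} c_S\, t^{(\sum_{i \in S} w_i) - m}\, x^S,
\]
which is a polynomial in $(x_1, \ldots, x_n, t) \in \A^n \times \A^1$. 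At $t = 0$, $g_0 = c_{S_0} \prod_{i \in S_0} x_i$ defines a simple normal crossings divisor, so $\lct_0(g_0) = 1$. For $t \neq 0$, $V(g_t)$ is the image of $V(f)$ under the origin-fixing automorphism $x_i \mapsto t^{-w_i} x_i$ of $\A^n$, so $\lct_0(g_t) = \lct_0(f)$.

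Finally, I would invoke the lower semi-continuity of the log canonical threshold in flat families to conclude $\lct_0(f) = \lct_0(g_t) \geq \lct_0(g_0) = 1$ for small nonzero $t$. The main obstacle is the correct application of this semi-continuity statement at the fixed point $0$; it applies in our setup because the family $\{V(g_t)\}_{t \in \A^1}$ is flat over $\A^1$ and the origin lies on every fiber. Note also that the argument uses the square-free hypothesis in an essential way: it is precisely what guarantees that the dominant-weight term $g_0$ is a product of distinct variables, hence supported on a simple normal crossings divisor.
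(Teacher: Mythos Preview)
Your proof is correct and takes a genuinely different route from the paper's. The paper argues by induction on $n$: when $x_i\nmid f$, the restriction $f\vert_{H_i}$ to the coordinate hyperplane $H_i=(x_i=0)$ is square-free in $n-1$ variables, and Inversion of Adjunction lifts log canonicity of $(H_i, Z\vert_{H_i})$ to log canonicity of $(\A^n, Z+H_i)$, hence of $(\A^n,Z)$, near the origin; the remaining case $f=c\,x_1\cdots x_n$ is SNC. Your argument instead degenerates $f$ via a generic one-parameter subgroup of the torus to its weight-initial term---a single square-free monomial---and concludes by lower semi-continuity of $\lct$ in families. Both approaches ultimately reduce to the SNC monomial case, but yours gets there in one step without induction, trading Inversion of Adjunction for the equally standard semi-continuity theorem (your flatness and section hypotheses are indeed satisfied). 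The paper's approach has the incidental advantage that Inversion of Adjunction is reused verbatim in the proof of Claim~\ref{claim1} in Theorem~\ref{thm_main}, so no extra machinery is introduced; your toric degeneration is more self-contained and makes especially transparent why square-freeness is exactly the hypothesis needed---it is what forces the initial term to be a reduced monomial.
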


\begin{proof}
We argue by induction on $n\geq 1$. If $n=1$, then $Z$ is either empty or a reduced point, and the assertion is clear. 
Suppose now that $n\geq 2$. It is enough to show that the pair $(\A^n,Z)$ has log canonical singularities around every point $P=(a_1,\ldots,a_n)\in\A^n$.
After replacing $x_i$ by $y_i=x_i-a_i$ for $1\leq i\leq n$, and using Lemma~\ref{lem_change_of_variable}, we see that we may assume that $P$ is the origin.
Given $i$, with $1\leq i\leq n$, if $H_i$ is the hyperplane defined by $(x_i=0)$ and $x_i$ does not divide $f$, then it is clear that $f\vert_{H_i}$ is square-free, hence the
pair $(H_i,Z\vert_{H_i})$ is log canonical by the inductive hypothesis. Inversion of Adjunction (see \cite[Theorem~7.5]{Kollar}) implies 
that the pair $(\A^n,Z+H_i)$ is log canonical in a neighborhood of $H_i$, and thus $(\A^n,Z)$ is log canonical in a neighborhood of $H_i$
(and, thus, in a neighborhood of $P$). On the other hand, if $x_i$ divides $f$ for $1\leq i\leq n$, then $f$ is a scalar multiple of $x_1\cdots x_n$.
In this case $Z=\sum_{i=1}^nH_i$ is a simple normal crossing divisor and the pair $(\A^n,Z)$ is clearly log canonical. 
\end{proof}

We now can show that hypersurfaces defined by irreducible square-free polynomials have rational singularities. 

\begin{proof}[Proof of Theorem~\ref{thm_main}]
The case $n\le 1$ is trivial, so we shall assume $n\geq 2$. 
For a (closed) point $P\in \A^n$, we denote by ${\rm mult}_P(Z)$ the multiplicity of $Z$ at $P$ (that is, the largest $q$ such that 
$f\in {\mathfrak m}_P^q$, where ${\mathfrak m}_P$ is the ideal defining $P$).
If $W$ is a proper irreducible closed subset of $\A_k^n$,
we denote by ${\rm mult}_W(Z)$ the multiplicity of $Z$ at a general point of $W$. Since we are in characteristic $0$, this can be described
as the largest $q$ such that $\tfrac{\partial^{\alpha} f}{\partial x^{\alpha}}$ vanishes on $W$ for all $\alpha=(\alpha_1,\ldots,\alpha_n)$
with $\sum_i\alpha_i\leq q-1$.
We will show that the following hold:
\begin{claim}\label{claim1}
If $f$ is square-free, then for every point $P\in \A^n$, we have ${\rm mld}_P(\A^n,Z)\geq n-{\rm mult}_P(Z)$.    
\end{claim}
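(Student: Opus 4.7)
My plan is to prove Claim~\ref{claim1} by induction on $n$, with the inductive step supplied by Inversion of Adjunction applied to a well-chosen coordinate hyperplane. For the base case $n=1$, the only divisor over $\A^1$ centered at a point $P$ is $\ord_P$ itself, so $\mld_P(\A^1,Z) = 1 - \mult_P(Z)$, which is exactly the required bound. For the inductive step, I translate via Lemma~\ref{lem_change_of_variable} to reduce to $P=0$, and set $m := \mult_0(Z)$. The edge case $m = n$ is immediate from Lemma~\ref{lem_log_canonical}: square-freeness forces $f$ to be a nonzero scalar multiple of $x_1\cdots x_n$, so $(\A^n, Z)$ is log canonical and $\mld_0(\A^n, Z) \geq 0 = n - m$.

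For $1 \leq m < n$, I would pick a coordinate hyperplane $H_i = \{x_i = 0\}$ as follows. Some monomial $x^S$ with $|S| = m$ has nonzero coefficient in $f$; since $m < n$, I can take $i \notin S$. For this index: $x_i \nmid f$ (so $H_i \not\subseteq Z$), the restriction $f|_{H_i}$ is square-free in $n-1$ variables, and $\mult_0(f|_{H_i}) = m$ — every monomial of $f|_{H_i}$ is a monomial of $f$ free of $x_i$ and hence of degree at least $m$, with $x^S$ attaining equality. The inductive hypothesis applied to $f|_{H_i}$ on $H_i \cong \A^{n-1}$ then gives
\[
\mld_0(H_i, Z|_{H_i}) \geq (n-1) - m.
\]

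To transfer this bound back to $\A^n$, I invoke Inversion of Adjunction for minimal log discrepancies in the smooth/smooth setting, obtaining
\[
\mld_0(\A^n, Z + H_i) \geq \mld_0(H_i, Z|_{H_i}) \geq (n-1) - m.
\]
For any divisor $E$ over $\A^n$ with $c_{\A^n}(E) = \{0\}$, the identity $a_E(\A^n, Z) = a_E(\A^n, Z + H_i) + \ord_E(H_i)$ combines with the elementary bound $\ord_E(H_i) \geq 1$ — the defining equation $x_i$ of $H_i$ lies in $\frm_0$, and $c_{\A^n}(E) = \{0\}$ forces $\ord_E(\frm_0) \geq 1$ — to yield $a_E(\A^n, Z) \geq (n-1-m) + 1 = n-m$. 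Taking the infimum over such $E$ closes the induction.

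The main obstacle is the use of Inversion of Adjunction at the level of minimal log discrepancies: the log canonicity version from \cite[Theorem~7.5]{Kollar} employed in Lemma~\ref{lem_log_canonical} does not by itself give the inequality $\mld_0(\A^n, Z+H_i) \geq \mld_0(H_i, Z|_{H_i})$ that I need. One must instead appeal to the sharper precise inversion of adjunction for minimal log discrepancies in the smooth/smooth case (due to Kawakita, and also obtainable via the Ein--Mustață contact-loci formalism). All of the remaining ingredients — the combinatorial choice of hyperplane preserving square-freeness and multiplicity, and the observation $\ord_E(H_i) \geq 1$ — are essentially formal.
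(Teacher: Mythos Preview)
Your argument is correct (the omitted case $m=0$ is trivial, since then $0\notin Z$ and $\mld_0(\A^n,Z)=\mld_0(\A^n,0)=n$), but it follows a genuinely different route from the paper.

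The paper does \emph{not} induct on $n$. Instead, after translating $P$ to the origin, it blows up $\A^n$ at the origin and uses the birational formula $\mld_P(\A^n,Z)=\mld_E\big(V,\widetilde{Z}-(n-1-m)E\big)$. The key observation is that in each standard chart of the blow-up, the restriction $\widetilde{Z}\vert_E$ is cut out by a dehomogenization of the initial form $f_m$, which is itself square-free; Lemma~\ref{lem_log_canonical} then gives that $(E,\widetilde{Z}\vert_E)$ is log canonical, and the \emph{log-canonical} form of Inversion of Adjunction (\cite[Theorem~7.5]{Kollar}) suffices to conclude that $(V,\widetilde{Z}+E)$ is log canonical near $E$. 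From there the bound $n-m$ drops out by the same ``add back $\ord_F(E)\geq 1$'' trick you use with $H_i$.

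The trade-off is exactly the one you flag: your hyperplane-slicing induction is conceptually straightforward and exploits square-freeness in the most obvious way (restriction to $\{x_i=0\}$), but it forces you to invoke \emph{precise} Inversion of Adjunction for minimal log discrepancies (Ein--Musta\c{t}\u{a}--Yasuda or Kawakita), which is strictly stronger than the log-canonical version and is not otherwise cited in the paper. The paper's blow-up argument avoids this by only ever needing the statement ``log canonical on the divisor implies log canonical on the ambient near the divisor,'' at the cost of checking square-freeness of $f_m$ in charts. So the paper's proof is more self-contained relative to the references already in play, while yours is shorter once the stronger adjunction input is granted.
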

\begin{claim}\label{claim3}
If $f$ is irreducible and square-free, then $Z$ is normal.
\end{claim}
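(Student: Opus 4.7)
The plan is to verify normality via Serre's criterion: since $f$ is irreducible, $Z$ is integral, hence reduced; and since $Z$ is a hypersurface in the smooth variety $\A^n$, it is Cohen--Macaulay, hence satisfies $S_2$. So it remains to check $R_1$, namely that $\dim\Sing(Z)\le n-3$. I would argue by contradiction, assuming that there is an irreducible subvariety $W\subseteq\Sing(Z)$ of codimension exactly $2$ in $\A^n$.

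Two quick reductions come first. The case $n\le 1$ is immediate (then $f$ is linear), and if $f$ fails to depend on some variable $x_k$, then $Z$ is a cylinder over a lower-dimensional irreducible square-free hypersurface and one reduces to the inductive hypothesis on $n$. I may thus assume $n\ge 2$ and that $f$ depends on each variable. Combined with irreducibility, this ensures that for every $j$ both $\partial_j f$ and $f|_{x_j=0}$ are nonzero multilinear polynomials in $\k[x_i:i\neq j]$, and moreover that they are coprime: any common factor divides $f=x_j\partial_j f+f|_{x_j=0}$ but is independent of $x_j$, so by irreducibility of $f$ it must be a unit. Consequently $V_j := V(\partial_j f,\,f|_{x_j=0})\subset\A^{n-1}$ has codimension at least $2$, so $\dim V_j\le n-3$.

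The heart of the argument is to fix $j$ and look at the projection $\pi_j\colon\A^n\to\A^{n-1}$ forgetting the $j$-th coordinate. Using the identity $f=x_j\partial_j f+f|_{x_j=0}$ and the fact that $\partial_j f$ does not involve $x_j$, a short computation shows $\pi_j(\Sing(Z))\subseteq V_j$, so in particular $\pi_j(W)\subseteq V_j$. Since $\dim W=n-2>\dim V_j$, the generic fiber of $\pi_j|_W$ has positive dimension; but each fiber of $\pi_j$ is just a line $\A^1$, so the generic fiber of $\pi_j|_W$ must fill out the entire line. Equivalently, $W$ is invariant under translation in the $x_j$-direction.

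Applying this argument for every $j\in\{1,\ldots,n\}$, $W$ is invariant under every coordinate translation, and hence under the full group of translations of $\A^n$. The only closed subvarieties of $\A^n$ invariant under all translations are $\emptyset$ and $\A^n$, contradicting $\codim_{\A^n}(W)=2$. The main obstacle is establishing the coprimality of $\partial_j f$ and $f|_{x_j=0}$ cleanly, as this is where irreducibility enters essentially; the rest is a straightforward dimension count combined with the translation-invariance observation.
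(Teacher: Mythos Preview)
Your proof is correct and follows essentially the same approach as the paper: both reduce to Serre's criterion, write $f=x_j\,\partial_j f+f|_{x_j=0}$ using square-freeness, use irreducibility to see that $\partial_j f$ and $f|_{x_j=0}$ are coprime, and conclude that the offending component $W$ is a pullback along every coordinate projection $\pi_j$, forcing $W=\A^n$. The only cosmetic difference is that the paper phrases the last step as ``$W$ is an irreducible component of the cylinder $V(\partial_j f,f|_{x_j=0})$, hence itself a cylinder,'' whereas you phrase it via fiber dimension and translation invariance.
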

\begin{claim}\label{claim2}
If $f$ is irreducible and square-free, and $W$ is a closed, irreducible subset of $\A^n$ of codimension $r\geq 2$, then ${\rm mult}_W(Z)\leq r-1$.
\end{claim}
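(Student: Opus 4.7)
My plan is to prove Claim~\ref{claim2} by strong induction on $n$. The base case $n \leq 2$ is handled directly: in $\A^1$ an irreducible polynomial is linear, and in $\A^2$ a short computation shows that any irreducible square-free polynomial defines a smooth curve, so every multiplicity is at most $1$.

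For the inductive step, let $q := {\rm mult}_W(Z)$ and assume for contradiction that $q \geq r$. Two cases resolve immediately. When $r = 2$, normality of $Z$ (Claim~\ref{claim3}) implies $\Sing(Z)$ has codimension $\geq 3$ in $\A^n$, ruling out $q \geq 2$. When $r = n$, so $W$ is a single point $P$, translating $P$ to the origin via Lemma~\ref{lem_change_of_variable}, the only square-free polynomial of multiplicity $\geq n$ at the origin is a scalar multiple of $x_1 \cdots x_n$, which is reducible for $n \geq 2$.

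For the remaining range $3 \leq r \leq n-1$, I pick a coordinate $x_i$ that is non-constant on $W$ (possible as $\dim W \geq 1$) and translate a general smooth point of $W$ to the origin, ensuring $W \not\subseteq H_i := V(x_i)$. If $\partial f / \partial x_i = 0$, then $f$ is an irreducible square-free polynomial in $n-1$ variables, and the projection $W^* \subset \A^{n-1}$ of $W$ has codimension $r-1 \geq 2$ with ${\rm mult}_{W^*}(V(f)) = q$, contradicting the inductive hypothesis. Otherwise $f|_{H_i}$ is a nonzero square-free polynomial in $n-1$ variables, factoring by square-freeness as $f|_{H_i} = \prod_j g_j$ with each $g_j$ irreducible and the supports $T_j \subset \{1, \ldots, n\} \setminus \{i\}$ pairwise disjoint. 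Let $W'$ be the component of $W \cap H_i$ through the origin, $J := \{j : W' \subseteq V(g_j)\}$, $W'_j$ the projection of $W'$ to $\A^{T_j}$, and $s_j := \codim_{\A^{T_j}}(W'_j)$. A dimension count (bounding the image of $W'$ in $\prod_{j \in J} \A^{T_j}$ both by $\dim W' - |R|$ from below and by $\sum_j (|T_j| - s_j)$ from above, with $R$ the complementary variables) yields $\sum_{j \in J} s_j \leq r$. The inductive hypothesis applied to each $g_j$ gives ${\rm mult}_{W'_j}(V(g_j)) \leq s_j - 1$ when $s_j \geq 2$, and equals $1$ when $s_j = 1$. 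Summing and using ${\rm mult}_{W'}(V(f|_{H_i})) \geq q$ gives $q \leq r - |\{j : s_j \geq 2\}|$, yielding the desired contradiction unless all $s_j = 1$ and $|J| = r$.

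The main obstacle is this degenerate case. Here equalities throughout the dimension count force $W' = V(g_1, \ldots, g_r) \times \A^R$ and $W = W' \times \A^1_{x_i}$. Writing $f = x_i A + f|_{H_i}$ with $A := \partial f / \partial x_i$, and noting that $x_i$ is a unit in $\cO_{\A^n, \eta_W}$, the hypothesis ${\rm mult}_W(Z) \geq r$ forces $A$ to have multiplicity $\geq r$ along $V(g_1, \ldots, g_r)$ in $H_i$, that is, $A \in (g_1, \ldots, g_r)^r$. A careful analysis of which square-free polynomials lie in this ideal power---using that the $g_j$'s involve pairwise disjoint variable sets and each has degree exactly $1$ in the variables of its support---forces $A = c \cdot \prod_j g_j$ for some square-free $c \in \k[x_l : l \in R]$; the alternative $c = 0$ is excluded because it would yield $A = 0$, contrary to $\partial f / \partial x_i \neq 0$. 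Substituting back gives $f = (x_i c + 1) \prod_j g_j$, a product of $r + 1 \geq 4$ nontrivial irreducible factors, contradicting the irreducibility of $f$.
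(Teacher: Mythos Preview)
Your argument has a genuine gap in the degenerate case. From $W' = V(g_1,\ldots,g_r)\times\A^R$ inside $H_i$ you assert that $W = W'\times\A^1_{x_i}$, i.e.\ that $W = V(g_1,\ldots,g_r)$ in $\A^n$, but nothing in your dimension count concerns $W$ itself. Knowing that one component of $W\cap H_i$ through the origin equals $V(g_1,\ldots,g_r,x_i)$ does not force $W$ to be cut out by the $g_j$: for instance $W$ could be $V(g_1,\ldots,g_{r-1},\,g_r + x_i h)$ for suitable $h$, which meets $H_i$ in the same locus. Without $W = V(g_1,\ldots,g_r)$ you cannot pass from $\mult_W(f)\ge r$ to $A\in (g_1,\ldots,g_r)^r$ (working at $\eta_W$ gives information about the ideal of $W$, not about $(g_1,\ldots,g_r)$), and the factorization $f=(x_ic+1)\prod_j g_j$ does not follow. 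So the degenerate case is not handled.

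The paper avoids this difficulty altogether by a different, simpler idea: it restricts to a hyperplane $H_t=\{x_1=t\}$ for \emph{general} $t$ and shows, using the normality of $Z$ (your Claim~\ref{claim3}) and Bertini, that $Z\cap H_t$ is normal and hence $f\vert_{H_t}$ is again \emph{irreducible} square-free. The induction (on $\dim W$) then applies directly to $f\vert_{H_t}$ and a component $W_t$ of $W\cap H_t$, with codimension preserved, and no factorization analysis or degenerate case is needed. Ironically, since you translate a \emph{general} point of $W$ to the origin and $x_i$ is non-constant on $W$, your $H_i$ \emph{is} such a general hyperplane, so $f\vert_{H_i}$ is in fact irreducible and the case $|J|=r\ge 3$ never occurs---but your proof does not establish this, so as written it is incomplete. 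Proving that irreducibility is exactly the missing idea; once you have it, the factorization machinery becomes unnecessary.
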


Let's prove first Claim~\ref{claim1}. As in the proof of Lemma~\ref{lem_log_canonical}, using a suitable change of variables and Lemma~\ref{lem_change_of_variable},
we may assume that $P$ is the origin. Let $g\colon V\to \A^n$ be the blow-up of $\A^n$ at $P$. We denote by $E$ the exceptional divisor and by $\widetilde{Z}$ the strict
transform of $Z$, so we have $g^*(Z)=\widetilde{Z}+mE$, where $m={\rm mult}_P(Z)$. Since $K_{V/\A^n}=(n-1)E$, it follows from (\ref{formula_mld}) that
\begin{equation}\label{eq_blow_up}
{\rm mld}_P(\A^n,Z)={\rm mld}_E\big(V,g^*(Z)-K_{V/\A^n}\big)={\rm mld}_E\big(V,\widetilde{Z}-(n-1-m)E\big).
\end{equation}

Let us write $f=\sum_{\ell\geq m}f_{\ell}$, where each $f_{\ell}$ is homogeneous of degree $\ell$. 
We note that $V=V_1\cup\ldots\cup V_n$, where $V_i\simeq\A^n$ is the chart on $V$ with coordinates $y_1,\ldots,y_n$ such that $x_i=y_i$ and $x_j=y_iy_j$ for all $j\neq i$. Note that $E\cap V_i$ is the hyperplane defined by $(y_i=0)$. Since $\widetilde{Z}\vert_E\cap V_i$
is defined in $E\cap V_i$ by $f_m(y_1,\ldots,1,\ldots,y_n)$, where $1$ appears in the $i$th spot, and this is a square-free polynomial, we deduce via Lemma~\ref{lem_log_canonical}
that the pair $(E,\widetilde{Z}\vert_E)$ is log canonical. By Inversion of Adjunction (see \cite[Theorem~7.5]{Kollar}), we conclude that the pair
$(V,\widetilde{Z}+E)$ is log canonical in a neighborhood of $E$. This implies that if $F$ is a divisor over $V$, with $c_V(F)\subseteq E$, then 
$$a_F\big(V,\widetilde{Z}-(n-1-m)E\big)=a_F(V,\widetilde{Z}+E)+(n-m)\cdot {\rm ord}_F(E)\geq n-m,$$ using the fact that
$a_F(V,\widetilde{Z}+E)\geq 0$, since $(V,\widetilde{Z}+E)$ is log canonical in a neighborhood of $E$, and 
${\rm ord}_F(E)\geq 1$, since $c_V(F)\subseteq E$. Using (\ref{eq_blow_up}), we conclude that
${\rm mld}_P(\A^n,Z)\geq n-m$, completing the proof of Claim~\ref{claim1}.

Let us prove Claim~\ref{claim3}.  Since $Z$ is a hypersurface, it is Cohen--Macaulay, hence by Serre's criterion,
in order to prove that $Z$ is normal, it is enough to show that the codimension in $Z$ of the singular locus $Z_{\rm sing}$ of $Z$ is $\geq 2$. 
Arguing by contradiction, suppose that $Z_0$ is an irreducible component of $Z_{\rm sing}$ with $\dim(Z_0)\geq n-2$. We may and will assume that all
variables $x_i$ appear in $f$. Since $f$ is square-free, for every $i$, we may write $f=g_ix_i+h_i$, with $g_i,h_i\in \k[x_1,\ldots,x_{i-1},x_{i+1},\ldots,x_n]$.
By assumption, we have $g_i\neq 0$, and we may also assume $h_i\neq 0$ (otherwise, since $f$ is irreducible, it follows that $g_i\in \k$ and we are done).
Since $Z_0$ is contained in the zero-locus of $\tfrac{\partial f}{\partial x_i}$, it follows that it is contained in the zero-locus of $(g_i,h_i)$.
The irreducibility of $f$ implies that the hypersurfaces defined by $g_i$ and $h_i$ have no common irreducible components,
hence $Z_0$ is an irreducible component of $V(g_i,h_i)$. We thus see that 
if $\pi_i\colon\A_k^n\to\A_k^{n-1}$ is the projection that forgets the $i$th component, then $Z_0$ is the inverse image via $\pi_i$
of a closed subset of $\A_k^{n-1}$. Since this holds for all $i$, it follows that $Z_0=\A_k^n$, a contradiction. This completes the proof of the fact that $Z$ is normal.

We next prove Claim~\ref{claim2} by induction on $d=\dim(W)$. If $d=0$, then $r=n$ and $W=\{P\}$ is a point.
Using again a suitable change of variables and Lemma~\ref{lem_change_of_variable}, we may assume that $P$ is origin.
In this case, since f is square-free,
it is clear that ${\rm mult}_P(f)\leq n$. Moreover, this is an equality if and only if $f$ is a scalar multiple of $x_1\cdots x_n$.
However, this can't happen when $n\geq 2$, since $f$ is assumed to be irreducible (here is where we use the hypothesis $r\geq 2$).

 Suppose now that $d\geq 1$. After relabeling the coordinates, we may and will assume that the projection of $W$ to the first component is not constant. We may also assume that $W\subseteq Z$, since otherwise the inequality to be proved is clear. 
By hypothesis, if $H_t$ is the hyperplane in $\A^n_k$
given by $(x_1-t=0)$, for $t\in \k$ general, then $W\cap H_t$ is nonempty, of pure dimension $d-1$. It is clear that, with respect to the restriction of the variables
$x_2,\ldots,x_n$, the polynomial $f\vert_{H_t}$ is square-free; when $t$ is generic, then it is also irreducible. Indeed, it follows from the Kleiman--Bertini theorem
(recall that we are in characteristic $0$) that the singular locus of $Z\cap H_t$ is contained in $Z_{\rm sing}\cap H_t$. For general $t$, the latter set
has dimension $\leq \dim(Z_{\rm sing})-1\leq n-4$, where the last inequality follows from Claim~\ref{claim3}. By Serre's criterion, this implies that
$Z\cap H_t$ is normal, and thus $f\vert_{H_t}$ is irreducible by Remark~\ref{rem_normal}. 

Let $W_t$ be an irreducible component of $W\cap H_t$. Since $\dim(W_t)=d-1$, we may apply the inductive hypothesis for $f\vert_{H_t}$ and $W_t$.
Note that ${\rm codim}_{H_t}(W_t)=r$, and thus we conclude that
$${\rm mult}_W(Z)\leq {\rm mult}_{W_t}(Z\cap H_t)\leq r-1$$
(while we do not need this, we note that the first inequality above is, in fact, an equality, since the hyperplane $H_t$ is general).
This completes the proof of Claim~\ref{claim2}.

We now combine the assertions in Claims~\ref{claim1} and \ref{claim2} to show that $Z$ has rational singularities. By Theorem~\ref{thm_ingredient},
it is enough to show that for every irreducible closed subset $W$ of $\A^n$ with ${\rm codim}_{\A^n}(W)=r\geq 2$, we have 
${\rm mld}_W(\A^n,Z)\geq 1$. Since
$${\rm mld}_W(\A^n,Z)=\inf_{W'\subseteq W}\big\{{\rm mld}_{\eta_{W'}}(\A^n,Z)\big\},$$
it is enough to show that for all $W$ as above, we have
${\rm mld}_{\eta_W}(\A^n,Z)\geq 1$. 

Let $P\in W$ be a general point, so that ${\rm mult}_W(Z)={\rm mult}_P(Z)$. 
Since the pair $(X,Z)$ is log canonical by Lemma~\ref{lem_log_canonical}
and since $P\in W$ is a general point, we have 
\begin{equation}\label{eq1_pf_thm_main}
{\rm mld}_{\eta_W}(\A^n,Z)={\rm mld}_P(\A^n,Z)-\dim(W),
\end{equation}
see \cite[Proposition~2.3]{Ambro}.
By Claim~\ref{claim1}, we have
\begin{equation}\label{eq2_pf_thm_main}
{\rm mld}_P(\A^n,Z)\geq n-{\rm mult}_P(Z)=n-{\rm mult}_W(Z)\geq n-r+1,
\end{equation}
where the last inequality follows from Claim~\ref{claim2}. By combining (\ref{eq1_pf_thm_main}) and (\ref{eq2_pf_thm_main}),
we obtain ${\rm mld}_{\eta_W}(\A^n,Z)\geq 1$, completing the proof of the theorem.
\end{proof}

\begin{rmk}
The result in Theorem~\ref{thm_main} was recently extended by Supravat Sarkar \cite{Sarkar}, who showed that irreducible divisors in ${\mathbb P}^{n_1}\times\ldots\times {\mathbb P}^{n_r}$
of type $(1,\ldots,1)$ have rational singularities. His proof relies on techniques from the Minimal Model Program.
\end{rmk}

\begin{rmk}
After the first version of this article was made public, Matt Larson pointed out to us that, in fact, one can use deduce the assertion in Theorem~\ref{thm_main} 
(and also the above-mentioned result of Sarkar)
from 
a more general result of Michel Brion (see \cite[Theorem~5]{Brion1} and also
\cite[Remark~3]{Brion2}). Brion's concerns multiplicity-free subvarieties in homogeneous spaces. In our setting, it applies to the closure $\Gamma$ in $({\mathbb P}^1)^n$ of the hypersurface defined by $f$. Since $f$ is square-free, it follows that
$\Gamma$ is multiplicity-free, in the sense that its cohomology class written in terms of the usual basis for the cohomology of $({\mathbb P}^1)^n$ has only $0$ or $1$ coefficients. Brion's result then implies that since $\Gamma$ is irreducible and reduced, it has rational singularities. 
\end{rmk}

We next turn to the second result stated in the Introduction.

\begin{proof}[Proof of Theorem~\ref{thm2}]
We put $d={\rm deg}(g)$. Let $L'=a_1x_1+\ldots+a_nx_n+x_0$ and $f'=gL'+h\in \k[x_0,x_1,\ldots,x_n]$. 
We denote by $Z'$ the hypersurface defined by $f'$ in $\A^{n+1}$.
By assumption, $f'$ is homogeneous, of degree $d+1$. If $Z'$ has rational singularities, then so does $Z$
(indeed, the open subset $U$ of $\A^{n+1}$ defined by $(x_0\neq 0)$ is isomorphic to $\A^n\times \big(\A^1\smallsetminus\{0\}\big)$,
such that $Z'\cap U$ coincides with the pull-back of $Z$ via the first projection). 

We consider the change of coordinates $y_i=x_i$ for $1\leq i\leq n$
and $y_0=L'(x)$. Since $g$ and $h$ are square-free polynomials, it is clear that $f'$ is square-free with respect 
to the coordinates $y_0,\ldots,y_n$. Moreover, $f'$ is irreducible. Indeed, suppose that we can write $f'=PQ$, with 
$P,Q\in \k[y_0,\ldots,y_n]$ of positive total degree. Since ${\rm deg}_{y_0}(f')=1$, we may assume that
${\rm deg}_{y_0}(P)=0$ and ${\rm deg}_{y_0}(Q)=1$. If we write $Q=Q_1y_0+Q_2$, with $Q_1,Q_2\in \k[x_1,\ldots,x_n]$,
it follows that $g=PQ_1$ and $h=PQ_2$. Since $g$ is irreducible, it follows that $Q_1\in \k$, and thus $g$ divides $h$, a contradiction.

Since $f'$ is irreducible and square-free, we may apply Theorem~\ref{thm_main} to conclude that $Z'$ has rational singularities.
As we have seen, this implies the assertion in the theorem.
\end{proof}

\section{Comparison with the results in \cite{BW}}

We end by making the connection with the polynomial invariants associated to matroids and Feynman diagrams in \cite{BW}. For the relevant definitions related to matroid theory,
we refer to \cite[Section 2]{BW}. 

If $M$ is a matroid on the set $E=\{1,\ldots,n\}$, then one defines in \cite[Definition~2.11]{BW} a \emph{matroid support polynomial} of $M$
as a polynomial $\xi_M\in \k[x_1,\ldots,x_n]$
of the form $\xi_M=\sum_Bc_Bx^B$, where $B$ is running over the bases of $M$, $c_B\in \k$ is nonzero for all $B$, and $x^B=\prod_{i\in B}x_i$.
It is clear that $\xi_M$ is a square-free polynomial. Moreover, if $M$ is a connected matroid, of positive rank (so $\xi_M$ is not invertible),
then $\xi_M$ is irreducible. Indeed,
if $\xi_M=PQ$, for non-invertible polynomials $P$ and $Q$, then $P$ and $Q$ involve disjoint sets of variables. We thus have a decomposition
$E=E'\sqcup E''$, with $E'$ and $E''$ nonempty, and corresponding matroids $M'$ on $E'$ and $M''$ on $E''$ such that the bases of $B$ consist of 
subsets of the form $B'\cup B''$, where $B'$ is a basis of $M'$ and $B''$ is a basis of $M''$. This contradicts the fact that $E$ is connected.
We can thus apply Theorem~\ref{thm_main} to conclude that the hypersurface defined by a matroid support polynomial of $M$ has
rational singularities. Note that \cite[Theorem~3.4]{BW} gives the (stronger) result saying that in positive characteristic, the hypersurface defined
by such $\xi_M$ has $F$-rational (or equivalently, strongly $F$-regular) singularities. 

More generally, one defines in \cite[Definition~2.15]{BW} the notion of \emph{matroidal polynomial} associated to a matroid $M$ and certain extra data 
(called \emph{singleton data}). This is again a square-free polynomial and if $M$ is connected, of positive rank, then this polynomial is irreducible
by \cite[Corollary~2.19]{BW}. We can thus apply Theorem~\ref{thm_main} to deduce that the corresponding hypersurface has rational singularities, recovering
\cite[Corollary~4.29]{BW}.

In \cite{BW}, the authors also consider certain polynomials coming from Feynman diagrams and Feynman integrals, which are interesting to mathematical physicists due to their importance in Quantum Field Theory. A Feynman diagram is a graph $(V,E)$ decorated with various mass data $\mathbf{m}$ and momenta data $p$; in Lee--Pomeransky form, the Feynman integral is the Mellin transform of the \emph{Feynman diagram polynomial} $\mathscr{G} = \mathscr{U}(1 + \Delta_{\mathbf{m}}^E) + F_0^W$ (see \cite[Definition~6.41]{BW} for a precise definition and \cite[Section~6.4]{BW} for a quick survey of the physics set-up). Feynman diagram polynomials fit into the set-up of Theorem \ref{thm2}: $\mathscr{U}$ is a matroid support polynomial for the cographic matroid $M_G^\perp$ associated to the underlying (undecorated) graph and plays the role of $g$; $1 + \Delta_{\mathbf{m}}^E$ is linear and plays the role of $L$; $F_0^W$ is squarefree with $\deg(F_0^W) = \deg(\mathscr{U}) + 1$ and plays the role of $h$. Under mild genericity conditions on mass and momenta (see \cite[Proposition~6.43]{BW}), the Feynman diagram polynomial is an instance of a so-called \emph{Feynman integrand} ${\rm Feyn}(\zeta_N, \Delta_{\mathbf{m}}, \xi_M)$, see \cite[Definition~6.1]{BW}. Provided the underlying matroid is connected and of positive rank\footnote{The assumption in \cite{BW} is that the rank of the matroid is $\geq 2$, in order to avoid the possibility that the hypersurface is smooth. However, in this note, we use the convention that a smooth variety has rational singularities.}, it is proven in \cite[Corollary~6.39]{BW} that all Feynman integrands have rational singularities and so, under the aforementioned genericity assumptions, all Feynman diagram polynomials have rational singularities \cite[Theorem~6.44]{BW}. Theorem \ref{thm2} allows the extension of the result on Feynman diagram polynomials, by removing the genericity assumptions on mass and momenta data, as follows:

\begin{prop}\label{last_prop}
Let $G=G(V,E,\mathbf{m}, p)$ be a Feynman diagram and $\mathscr{G}$ the corresponding Feynman diagram polynomial. If the underlying graphic matroid $M_G$ of the graph $(V,E)$ is connected, of positive rank, then the hypersurface defined by $\mathscr{G}$ has rational singularities. 
\end{prop}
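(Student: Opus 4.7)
The plan is to apply Theorem~\ref{thm2} directly to the decomposition
$$\mathscr{G} = \mathscr{U}\cdot(1+\Delta_{\mathbf{m}}^E) + F_0^W$$
recalled in the paragraph preceding the proposition, taking $g=\mathscr{U}$, $L=1+\Delta_{\mathbf{m}}^E$, and $h=F_0^W$. As noted there, $L$ is affine linear with constant term $1$, so it has the form $a_1x_1+\cdots+a_nx_n+1$ required by Theorem~\ref{thm2}, with the coefficients $a_i$ coming from the mass data $\mathbf{m}$.

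Next I would verify the square-freeness and degree conditions. Square-freeness of $\mathscr{U}$ is immediate from its description as a matroid support polynomial of the cographic matroid $M_G^\perp$ (its monomials are products $x^B$ over bases $B$). Square-freeness of $F_0^W$ and the identity $\deg(F_0^W)=\deg(\mathscr{U})+1$ are precisely the content of the paragraph preceding the proposition, relying on \cite[Section~6.4]{BW}. To show that $\mathscr{U}$ is irreducible, I would invoke Remark~\ref{rmk_Feynman_integrands}: it suffices that $M_G^\perp$ be connected of positive rank. Matroid duality preserves connectedness, so $M_G^\perp$ is connected, and its rank $|E|-\rk(M_G)$ is positive unless $M_G$ is the free matroid on $E$; matroid-connectedness together with the free-matroid condition forces $|E|=1$, a boundary case in which $\mathscr{G}$ defines a smooth hypersurface and hence has rational singularities by our convention.

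The main obstacle is to verify the non-divisibility hypothesis: $\mathscr{U}$ does not divide $F_0^W$. Since $\deg(F_0^W)=\deg(\mathscr{U})+1$ and $\mathscr{U}$ is irreducible, any divisibility relation would force $F_0^W = c\cdot\mathscr{U}\cdot\ell$ for some scalar $c\in\k$ and linear form $\ell\in\k[x_1,\ldots,x_n]$. To rule this out I would appeal to the combinatorial description of $F_0^W$ in \cite[Section~6.4]{BW}: while $\mathscr{U}$ is a spanning-tree polynomial of $G$ (involving only the underlying graph), $F_0^W$ is built from spanning two-forests with momentum-dependent coefficients, so its monomial support is not contained in the set of products of bases of $M_G^\perp$ with a single linear monomial. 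A clean contradiction can be produced either by matching the coefficient of a well-chosen top-degree monomial on both sides of $F_0^W = c\cdot\mathscr{U}\cdot\ell$, or by specializing the $x_i$ so that $\mathscr{U}$ vanishes while $F_0^W$ does not (for instance, by sending to $0$ the variables outside a fixed spanning tree of $G$, which kills $\mathscr{U}$ but leaves a nonzero contribution from the two-forest terms of $F_0^W$). Once $\mathscr{U}\nmid F_0^W$ is established, all hypotheses of Theorem~\ref{thm2} are in force and the hypersurface defined by $\mathscr{G}$ has rational singularities.
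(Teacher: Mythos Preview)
Your overall strategy---reduce to Theorem~\ref{thm2} via the decomposition $\mathscr{G}=\mathscr{U}\cdot L+F_0^W$ and verify the hypotheses---is exactly the paper's. Your treatment of square-freeness, the degree condition, and the irreducibility of $\mathscr{U}$ (including the careful handling of the $|E|=1$ boundary case) is fine and matches the paper's reasoning.

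The genuine gap is in the non-divisibility step $\mathscr{U}\nmid F_0^W$. Your proposed specialization does not work: setting $x_e=0$ for $e\notin T_0$ does \emph{not} kill $\mathscr{U}=\sum_T\prod_{e\notin T}x_e$. For the triangle graph, $\mathscr{U}=x_1+x_2+x_3$, and zeroing the single edge outside any spanning tree leaves a nonzero linear form. In general the monomial for a tree $T$ survives whenever $E\setminus T\subseteq T_0$, which happens for many $T\neq T_0$. Your alternative ``coefficient-matching'' idea is left unexecuted, and carrying it out would require delving into the momentum-dependent structure of $F_0^W$, which is exactly the kind of case analysis one wants to avoid here (and which could be delicate for degenerate momenta).

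The paper's argument for non-divisibility is both simpler and purely formal, using nothing about Feynman combinatorics beyond what you already have. Since $M_G^\perp$ is connected it is loopless, so every variable $x_i$ actually occurs in $g=\mathscr{U}$. Now suppose $h=gP$. Because $h$ is square-free and $g$ already has degree~$1$ in every variable, $P$ must involve only variables disjoint from those of $g$; hence $P$ is a constant, contradicting $\deg(h)=\deg(g)+1$. This two-line observation replaces your entire third paragraph and is worth internalizing: for square-free polynomials, a factor that uses all the variables can have no nonconstant square-free multiple.
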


\begin{proof}
As described above, we can put $\mathscr{G}$ in the setting of Theorem \ref{thm2}. Indeed, we have $\mathscr{G} = gL + h$, where $g$ is a matroid support polynomial for the cographic matroid $M_G^\perp$, $g$ and $h$ are square-free, and $\deg(h) = \deg(g) + 1$. By Theorem \ref{thm2}, it suffices to verify that $g$ is irreducible and $g$ does not divide $h$. Note that $g$ is not invertible since we assume the matroid has positive rank. On the other hand, the connectedness of the graphic matroid $M_G$ is equivalent to the connectedness of the cographic matroid $M_G^\perp$, and we have already seen that the latter's connectedness implies that $g$ is irreducible. Moreover, since $M_G^\perp$ is connected and in particular loopless, all variables $x_i$ appear in $g$. This implies $g$ does not divide $h$. For if $h = gP$, then $g$ and $P$ use disjoint variables as $h$ and $g$ are square-free. However, since all variables appear in $g$, it follows that
$P$ is a constant, contradicting $\deg(h) = \deg(g) + 1$.
\end{proof}

\begin{rmk}
An entirely similar argument shows that Theorem \ref{thm2} recovers the rational singularity result \cite[Corollary~6.39]{BW} for Feynman integrands.
\end{rmk}

\begin{rmk}
One can ask (and indeed, we were asked this question) whether hypersurfaces defined by irreducible Lorentzian polynomials in the sense of
\cite{HuhBraden} have rational singularities. This is a natural question: a Lorentzian polynomial has $M$-convex monomial support and the intersection of square-free polynomials and polynomials with $M$-convex monomial support are exactly the matroid support polynomials. However, the answer is negative: there are Lorentzian polynomials that remain irreducible over ${\mathbb C}$ whose corresponding complex hypersurface $Z$ in ${\mathbb A}^n$ doesn't have rational singularities. In fact, we might not even have
$({\mathbb A}^n,Z)$ log canonical. 
This is due to the fact that Lorentzian polynomials (which are always homogeneous) have no intrinsic restriction on their degrees, whereas the condition $({\mathbb A}^n, Z)$ log canonical requires the degree of the polynomial to be $\leq n$ (this follows by 
considering the exceptional divisor on the blow-up of ${\mathbb A}^n$ at $0$). 
For example, if $Z$ is the complex hypersurface in ${\mathbb A}^3$ defined by the Lorentzian polynomial
\begin{equation*}
    \sum_{\{(a,b,c) \in \mathbb{Z}_{\geq 0}^3 \,|\, a + b + c = 4\}} \frac{1}{a! \cdot b! \cdot c!} x^a y^b z^c\in {\mathbb R}[x,y,z]
\end{equation*}
(see \cite[Theorem~3.10]{HuhBraden}), which remains irreducible over ${\mathbb C}$, 
then $({\mathbb A}^n,Z)$ is not log canonical. 
\end{rmk}

\noindent {\bf Acknowledgment}. We are grateful to Matt Larson for pointing out to us the reference \cite{Brion1}.

\section*{References}
\begin{biblist}

\bib{Ambro}{article}{
   author={Ambro, F.},
   title={On minimal log discrepancies},
   journal={Math. Res. Lett.},
   volume={6},
   date={1999},
   number={5-6},
   pages={573--580},
}

\bib{BW}{article}{
      author={Bath, D.},
      author={Walther, U.},
	title={Matroidal polynomials, their singularities, and
applications to Feynman diagrams},
	journal={preprint arXiv:2404.07885}, 
	date={2024}, 
}

\bib{HuhBraden}{article}{
    AUTHOR = {Br\"and\'en, P.},
    AUTHOR = {Huh, J.},
     TITLE = {Lorentzian polynomials},
   JOURNAL = {Ann. of Math. (2)},
    VOLUME = {192},
      date = {2020},
    NUMBER = {3},
     PAGES = {821--891},
     }

\bib{Brion1}{article}{
   author={Brion, M.},
   title={On orbit closures of spherical subgroups in flag varieties},
   journal={Comment. Math. Helv.},
   volume={76},
   date={2001},
   number={2},
   pages={263--299},
   }

\bib{Brion2}{article}{
   author={Brion, M.},
   title={Multiplicity-free subvarieties of flag varieties},
   conference={
      title={Commutative algebra},
      address={Grenoble/Lyon},
      date={2001},
   },
   book={
      series={Contemp. Math.},
      volume={331},
      publisher={Amer. Math. Soc., Providence, RI},
   },
   isbn={0-8218-3233-6},
   date={2003},
   pages={13--23},
}

\bib{Conca}{article}{
    author={Conca, A.},
    author={De Stefani, A.},
   author={N\'{u}\~{n}ez Betancourt, L.},
author={Smirnov, I.},
title={F-singularities of polynomials with square-free support},
journal={preprint arXiv:2501.16198},
date={2025},
}

\bib{Hara}{article}{
   author={Hara, N.},
   title={A characterization of rational singularities in terms of
   injectivity of Frobenius maps},
   journal={Amer. J. Math.},
   volume={120},
   date={1998},
   number={5},
   pages={981--996},
}

\bib{Kollar}{article}{
   author={Koll\'ar, J.},
   title={Singularities of pairs},
   conference={
      title={Algebraic geometry---Santa Cruz 1995},
   },
   book={
      series={Proc. Sympos. Pure Math.},
      volume={62, Part 1},
      publisher={Amer. Math. Soc., Providence, RI},
   },
   isbn={0-8218-0894-X},
   isbn={0-8218-0493-6},
   date={1997},
   pages={221--287},
}

%\bib{Kollar2}{book}{
%   author={Koll\'ar, J.},
%   title={Singularities of the minimal model program},
%   series={Cambridge Tracts in Mathematics},
%   volume={200},
%   note={With a collaboration of S\'andor Kov\'acs},
%   publisher={Cambridge University Press, Cambridge},
%   date={2013},
%   pages={x+370},
%}

\bib{MS}{article}{
   author={Mehta, V. B.},
   author={Srinivas, V.},
   title={A characterization of rational singularities},
   journal={Asian J. Math.},
   volume={1},
   date={1997},
   number={2},
   pages={249--271},
}

\bib{Mustata}{article}{
   author={Musta\c t\u a, M.},
   title={Jet schemes of locally complete intersection canonical
   singularities},
   note={With an appendix by D.~Eisenbud and E.~Frenkel},
   journal={Invent. Math.},
   volume={145},
   date={2001},
   number={3},
   pages={397--424},
}

\bib{Sarkar}{article}{
author={Sarkar, S.},
title={Singularity of ${\mathbb Q}$-divisors of multidegree one in multiprojective space},
journal={preprint arXiv:arXiv:2505.00960},
date={2025},
}

\bib{Smith1}{article}{
   author={Smith, K.~E.},
   title={Vanishing, singularities and effective bounds via prime
   characteristic local algebra},
   conference={
      title={Algebraic geometry---Santa Cruz 1995},
   },
   book={
      series={Proc. Sympos. Pure Math.},
      volume={62, Part 1},
      publisher={Amer. Math. Soc., Providence, RI},
   },
   isbn={0-8218-0894-X},
   isbn={0-8218-0493-6},
   date={1997},
   pages={289--325},
}

\bib{Smith2}{article}{
   author={Smith, K.~E.},
   title={$F$-rational rings have rational singularities},
   journal={Amer. J. Math.},
   volume={119},
   date={1997},
   number={1},
   pages={159--180},
}

\end{biblist}

\end{document}